\newtheorem{theorem}{Theorem}
\newtheorem{definition}[theorem]{Definition}
\newtheorem{lemma}[theorem]{Lemma}
\newtheorem{proposition}[theorem]{Proposition}
\newtheorem{example}[theorem]{Example}
\newtheorem{corollary}[theorem]{Corollary}
\title{Orthogonality and complementation in the lattice of subspaces of a finite-dimensional vector space over a finite field}
\author{Ivan~Chajda and Helmut~L\"anger}
\date{}
\begin{document}
\footnotetext[1]{Support of the research by the Austrian Science Fund (FWF), project I~4579-N, and the Czech Science Foundation (GA\v CR), project 20-09869L, as well as by \"OAD, project CZ~02/2019, is gratefully acknowledged.}
\maketitle
\begin{abstract}
We investigate the lattice $\mathbf L(\mathbf V)$ of subspaces of an $m$-dimensional vector space $\mathbf V$ over a finite field ${\rm GF}(q)$ with a prime power $q=p^n$. It is well-known that this lattice is modular and that orthogonality is an antitone involution. The lattice $\mathbf L(\mathbf V)$ satisfies the Chain condition and we determine the number of covers of its elements, especially the number of its atoms. We characterize when orthogonality is a complementation and hence when $\mathbf L(\mathbf V)$ is orthomodular. For $m>1$ and $p\nmid m$ we show that $\mathbf L(\mathbf V)$ contains a $(2^m+2)$-element (non-Boolean) orthomodular lattice as a subposet. Finally, for $q$ being a prime and $m=2$ we characterize orthomodularity of $\mathbf L(\mathbf V)$ by a simple condition.
\end{abstract}

{\bf AMS Subject Classification:} 06C15, 15A03, 12D15, 06C05

{\bf Keywords:} Vector space, lattice of subspaces, finite field, orthomodular lattice, modular lattice, Boolean lattice, complementation, isotropic vector

The lattice of subspaces of a given vector space was studied by several authors from various points of view. In particular, for (possibly infinite-dimensional) vector spaces over the field of complex numbers such lattices serve as an algebraic axiomatization of the logic of quantum mechanics. It was shown that such lattices are orthomodular and, if the vector space has finite dimension, even modular. The question arises if something similar holds for vector spaces over finite fields. An attempt in this direction was done by Eckmann and Zabey (\cite{EZ}). An interesting structure in such a lattice is the sublattice of closed subspaces. Unfortunately, this lattice need not be a sublattice of the lattice of all subspaces and, moreover, these lattice even need not be orthomodular. These facts imply that these lattices in general cannot be used in the logic of quantum mechanics (see \cite{EZ}). However, subspaces of finite-dimensional vector spaces turn out to be closed. It turns out that in such a case the lattice of subspaces sometimes has a nice structure and hence may be used in the axiomatization of logics similarly as lattices of topologically closed subspaces of Hilbert spaces over the complex numbers are used in the logic of quantum mechanics.

Throughout the paper we consider finite-dimensional vector spaces $\mathbf V$ over a finite field ${\rm GF}(q)$. Assume $\dim\mathbf V=m>1$ and $q=p^n$ for some prime $p$.

The paper is organized in the following way. First we derive some conditions which are satisfied by the lattice $\mathbf L(\mathbf V)$ of subspaces of $\mathbf V$ and we obtain a certain relationship between $m$ and $q$. Then we characterize those $\mathbf V$ for which $\mathbf L(\mathbf V)$ is orthomodular. The lattice $\mathbf L(\mathbf V)$ turns out to be orthomodular if and only if orthogonality is a complementation. We show that $\mathbf L(\mathbf V)$ contains Boolean subalgebras of size $2^m$ and that in case $p\nmid m$, $\mathbf L(\mathbf V)$ contains a $(2^m+2)$-element (non-Boolean) orthomodular lattice as a subposet. We show that in case $m=2$ the lattice $\mathbf L(\mathbf V)$ is fully determined by $q$. Unfortunately, we do not know any conditions characterizing those finite modular lattices which are isomorphic to $\mathbf L(\mathbf V)$ for some finite-dimensional vector space $\mathbf V$ over a finite field provided the dimension of $\mathbf V$ is greater than $2$. Hence we like to encourage the readers of this paper to continue in this research.

In the whole paper let $\mathbb N$ denote the set of all positive integers. Let $V$ denote the universe of $\mathbf V$. Without loss of generality assume $V=({\rm GF}(q))^m$. For $\vec a=(a_1,\ldots,a_m),\vec b=(b_1,\ldots,b_m)\in V$ and $M\subseteq V$ put
\begin{align*}
        \vec a\vec b & :=\sum_{i=1}^ma_ib_i, \\
   \vec a\perp\vec b & :\Leftrightarrow\vec a\vec b=0, \\
             M^\perp & :=\{\vec x\in V\mid\vec x\perp\vec y\text{ for all }\vec y\in M\}, \\
    \langle M\rangle & :=\text{ linear subspace of }\mathbf V\text{ generated by }M, \\
        L(\mathbf V) & :=\text{ set of all linear subspaces of }\mathbf V, \\
\mathbf L(\mathbf V) & :=(L(\mathbf V),+,\cap,{}^\perp,\{\vec 0\},V).
\end{align*}
As usual, by a {\em basis} of $\mathbf V$ we mean a linearly independent generating set of $\mathbf V$. It is well-known that any $m$ linearly independent vectors of $V$ form a basis of $\mathbf V$. Moreover, it is well-known that $\mathbf L(\mathbf V)$ is a modular lattice with an antitone involution, see e.g.\ Theorems~15 and 16 in \cite{CL19}. Moreover, this lattice is {\em paraorthomodular} (see \cite{GLP} for this concept and for several corresponding results) because it satisfies the condition
\[
U\subseteq W\text{ and }U^\perp\cap W=\{\vec0\}\text{ imply }U=W,
\]
see \cite{CL19} for the proof. It is well known that every bounded modular lattice with an antitone involution which is, moreover, a complementation is already orthomodular. Recall from \cite{Be} that an {\em orthomodular lattice} is a bounded lattice $(L,\vee,\wedge,{}',0,1)$ with an antitone involution which is a complementation such that
\[
x\leq y\text{ implies }y=x\vee(x'\wedge y)
\]
($x,y\in L$). The above arguments show that $\mathbf L(\mathbf V)$ is orthomodular if and only if $^\perp$ is a complementation. Hence we want to investigate when $^\perp$ is a complementation. For this purpose we define: The vector $\vec a\in V$ is called {\em isotropic} if $\vec a\neq\vec0$ and $\vec a\vec a=0$. For $n>1$ let $\mathbf M_n$ denote the modular lattice with the following Hasse diagram (see Fig.~1):

\vspace*{-4mm}

\begin{center}
\setlength{\unitlength}{7mm}
\begin{picture}(10,6)
\put(5,1){\circle*{.3}}
\put(1,3){\circle*{.3}}
\put(3,3){\circle*{.3}}
\put(7,3){\circle*{.3}}
\put(9,3){\circle*{.3}}
\put(5,5){\circle*{.3}}
\put(5,1){\line(-2,1)4}
\put(5,1){\line(-1,1)2}
\put(5,1){\line(1,1)2}
\put(5,1){\line(2,1)4}
\put(5,5){\line(-2,-1)4}
\put(5,5){\line(-1,-1)2}
\put(5,5){\line(1,-1)2}
\put(5,5){\line(2,-1)4}
\put(4.85,.3){$0$}
\put(.3,2.8){$a_1$}
\put(2.3,2.8){$a_2$}
\put(4.65,2.8){$\cdots$}
\put(7.25,2.8){$a_{n-1}$}
\put(9.2,2.8){$a_n$}
\put(4.85,5.35){$1$}
\put(4.2,-.8){{\rm Fig.~1}}
\end{picture}
\end{center}

\vspace*{3mm}

\begin{theorem}\label{th3}
The lattice $\mathbf L(\mathbf V)$ is orthomodular if and only if $V$ does not contain an isotropic vector.
\end{theorem}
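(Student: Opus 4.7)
The plan is to leverage the fact, already noted in the excerpt, that $\mathbf L(\mathbf V)$ is a bounded modular lattice with antitone involution $^\perp$, so that orthomodularity is equivalent to $^\perp$ being a complementation. Hence it suffices to characterize when $U\cap U^\perp=\{\vec0\}$ and $U+U^\perp=V$ hold for every $U\in L(\mathbf V)$.

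First I would record the dimension identity $\dim U+\dim U^\perp=m$, which holds because the standard bilinear form $\vec a\vec b=\sum_{i=1}^m a_ib_i$ on $({\rm GF}(q))^m$ is non-degenerate and hence induces a vector space isomorphism between $V$ and its dual that carries $U^\perp$ to the annihilator of $U$. Combined with the modular dimension formula $\dim(U+U^\perp)+\dim(U\cap U^\perp)=\dim U+\dim U^\perp$, this makes the two complementation conditions equivalent: $U\cap U^\perp=\{\vec0\}$ if and only if $U+U^\perp=V$. So I only need to test intersections.

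For the ``if'' direction I assume $V$ has no isotropic vector and pick an arbitrary $U\in L(\mathbf V)$. Any $\vec x\in U\cap U^\perp$ satisfies $\vec x\vec x=0$ (take $\vec y=\vec x$ in the definition of $U^\perp$), so $\vec x$ cannot be nonzero, hence $U\cap U^\perp=\{\vec0\}$. For the ``only if'' direction I contrapose: if $\vec a\in V$ is isotropic, put $U:=\langle\vec a\rangle$, which is one-dimensional and contains $\vec a$; since the form is bilinear and $\vec a\perp\vec a$, we also have $\vec a\in U^\perp$, so $\vec a$ is a nonzero element of $U\cap U^\perp$ and $^\perp$ fails to be a complementation.

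I do not foresee a genuine obstacle. The only subtlety worth double-checking is the identity $\dim U^\perp=m-\dim U$, which is standard over any field once one knows that the defining form is non-degenerate, but is worth making explicit because over finite fields the situation $U\subseteq U^\perp$ can actually occur (indeed, the one-dimensional span of an isotropic vector is the prototypical example), and this is precisely the phenomenon that obstructs orthomodularity.
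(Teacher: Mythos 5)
Your proposal is correct and follows essentially the same route as the paper's own proof: reduce orthomodularity to $^\perp$ being a complementation, use the dimension count $\dim U+\dim U^\perp=m$ together with the modular dimension formula to reduce everything to checking $U\cap U^\perp=\{\vec0\}$, and then observe that a nonzero vector in $U\cap U^\perp$ is exactly an isotropic vector (with $\langle\{\vec a\}\rangle$ as the witness in the converse direction). Your extra remark justifying $\dim U^\perp=m-\dim U$ via non-degeneracy of the form is a welcome clarification of a step the paper takes for granted.
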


\begin{proof}
We use the fact that $\mathbf L(\mathbf V)$ is orthomodular if and only if $^\perp$ is a complementation. Since
\[
\dim(U+U^\perp)+\dim(U\cap U^\perp)=\dim U+\dim U^\perp=m
\]
for all $U\in L(\mathbf V)$, we have $U+U^\perp=V$ if and only if $U\cap U^\perp=\{\vec0\}$. Hence $^\perp$ is a complementation if and only if $U\cap U^\perp=\{\vec0\}$ for all $U\in L(\mathbf V)$. If $^\perp$ is not a complementation then there exists some $U\in L(\mathbf V)$ and some $\vec a\in V\setminus\{\vec 0\}$ with $\vec a\in U\cap U^\perp$. But then $\vec a$ is isotropic. If, conversely, $V$ possesses an isotropic element $\vec b$ then $\vec b\neq\vec0$ and $\vec b\in\langle\{\vec b\}\rangle\cap\langle\{\vec b\}\rangle^\perp$ and hence $\langle\{\vec b\}\rangle\cap\langle\{\vec b\}\rangle^\perp\neq\{\vec0\}$, i.e.\ $^\perp$ is not a complementation.
\end{proof}

If $V$ does not contain an isotropic vector then $\mathbf L(\mathbf V)$ is orthomodular because $^\perp$ is an orthocomplementation and $\mathbf L(\mathbf V)$ is modular. This is the case in the following example.

\begin{example}\label{ex1}
Assume $(q,m)=(3,2)$. Then the Hasse diagram of $\mathbf L(\mathbf V)$ looks as follows {\rm(}see Fig.~2{\rm)}:

\vspace*{-4mm}

\begin{center}
\setlength{\unitlength}{7mm}
\begin{picture}(8,6)
\put(4,1){\circle*{.3}}
\put(1,3){\circle*{.3}}
\put(3,3){\circle*{.3}}
\put(5,3){\circle*{.3}}
\put(7,3){\circle*{.3}}
\put(4,5){\circle*{.3}}
\put(4,1){\line(-3,2)3}
\put(4,1){\line(-1,2)1}
\put(4,1){\line(1,2)1}
\put(4,1){\line(3,2)3}
\put(4,5){\line(-3,-2)3}
\put(4,5){\line(-1,-2)1}
\put(4,5){\line(1,-2)1}
\put(4,5){\line(3,-2)3}
\put(3.55,.2){$\{\vec0\}$}
\put(.3,2.8){$A$}
\put(2.3,2.8){$B$}
\put(5.25,2.8){$C$}
\put(7.2,2.8){$D$}
\put(3.8,5.35){$V$}
\put(3.2,-.8){{\rm Fig.~2}}
\end{picture}
\end{center}

\vspace*{3mm}

where
\begin{align*}
A & :=\{(0,0),(0,1),(0,2)\}, \\
B & :=\{(0,0),(1,0),(2,0)\}, \\
C & :=\{(0,0),(1,1),(2,2)\}, \\
D & :=\{(0,0),(1,2),(2,1)\}.
\end{align*}
Hence $(L(\mathbf V),+,\cap)\cong\mathbf M_4$. Moreover,
\[
\begin{array}{c|cccc}
   U    & A & B & C & D \\
\hline
U^\perp & B & A & D & C\end{array}
\]
and hence $\mathbf L(\mathbf V)$ is orthomodular. This is in accordance with the fact that $V$ has no isotropic vector.
\end{example}

On the contrary, we have the following situation.

\begin{example}
Assume $(q,m)=(5,2)$. Then $\mathbf L(\mathbf V)$ is not orthomodular since $U^\perp=U$ for
\[
U=\{(0,0),(1,3),(2,1),(3,4),(4,2)\}.
\]
This is in accordance with the fact that $(1,2)$ is an isotropic vector of $V$.
\end{example}

It turns out that $\mathbf V$ contains an isotropic vector if $m$ is large enough. More precisely, if $m\geq p$ then there exists an isotropic vector in $\mathbf V$.

\begin{proposition}\label{prop1}
There exists a unique integer $m(q)$ with $1<m(q)\leq p$ such that $\mathbf L(\mathbf V)$ is orthomodular if and only if $m<m(q)$. In case $p=2$ we have $m(q)=2$.
\end{proposition}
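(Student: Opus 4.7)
The plan is to combine Theorem~\ref{th3} with a monotonicity argument in the dimension $m$. First, I would observe that the property ``$({\rm GF}(q))^m$ contains an isotropic vector'' is monotone in $m$: given an isotropic vector $(a_1,\ldots,a_m)\in({\rm GF}(q))^m$, the zero-padded vector $(a_1,\ldots,a_m,0,\ldots,0)\in({\rm GF}(q))^{m'}$ remains nonzero with the same self-product, hence is isotropic in dimension $m'>m$. Consequently the set
\[
D_q:=\{m\geq 2\mid({\rm GF}(q))^m\text{ contains an isotropic vector}\}
\]
is upward closed in $\{2,3,\ldots\}$.

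Next I would exhibit an element of $D_q$ no larger than $p$. The all-ones vector $\vec e:=(1,\ldots,1)\in({\rm GF}(q))^p$ is nonzero and satisfies $\vec e\vec e=p\cdot 1=0$ because ${\rm char}({\rm GF}(q))=p$, so $p\in D_q$. In particular $D_q\neq\emptyset$ and $\min D_q\leq p$.

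I would then set $m(q):=\min D_q$, so that $2\leq m(q)\leq p$. By monotonicity $D_q=\{m(q),m(q)+1,\ldots\}$, and Theorem~\ref{th3} yields
\[
\mathbf L(\mathbf V)\text{ is orthomodular}\iff V\text{ has no isotropic vector}\iff m\notin D_q\iff m<m(q).
\]
Uniqueness of $m(q)$ with this property is immediate, since the property determines $D_q$ and hence its minimum. For $p=2$, the vector $(1,1)\in({\rm GF}(q))^2$ is already isotropic (self-product $1+1=2=0$), forcing $m(q)\leq 2$, and together with $m(q)\geq 2$ this gives $m(q)=2$.

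No step here is a genuine obstacle; the whole argument rests on the characteristic-$p$ identity $\underbrace{1+\cdots+1}_{p}=0$, which manufactures an isotropic vector the moment the dimension reaches $p$, together with the trivial zero-padding trick for propagating isotropic vectors to larger dimensions. The only point one must keep in mind is that zero-padding preserves the bilinear form, which is valid here because the paper has fixed the standard inner product on $({\rm GF}(q))^m$.
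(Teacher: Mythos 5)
Your proof is correct and follows essentially the same route as the paper: reduce via Theorem~\ref{th3} to the existence of isotropic vectors, define $m(q)$ as the minimal dimension admitting one, note the zero-padding monotonicity, and use the all-ones vector of length $p$ (respectively $(1,1)$ when $p=2$) to bound $m(q)\leq p$. The only cosmetic difference is that the paper defines $m(q)$ via tuples of \emph{nonzero} entries $a_1,\ldots,a_k$ with $a_1^2+\cdots+a_k^2=0$, which coincides with your $\min D_q$ after stripping zero coordinates, and you make the upward-closedness of $D_q$ explicit where the paper leaves it implicit.
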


\begin{proof}
By Theorem~\ref{th3}, we must show that there exists a unique integer $m(q)$ with $1<m(q)\leq p$ such that $V$ contains an isotropic vector $\vec a=(a_1,\ldots,a_m)$ if and only if $m\geq m(q)$. It is clear that in case $m=1$, $V$ cannot contain an isotropic vector. Obviously,
\[
m(q)=\min\{k\in\mathbb N\mid\text{there exist }a_1,\ldots,a_k\in({\rm GF}(q))\setminus\{0\}\text{ with }a_1^2+\cdots+a_k^2=0\}
\]
and since in case $m\geq p$ the vector $(1,\ldots,1,0,\ldots,0)$ with $1$ written down $p$ times is an isotropic vector of $V$, we have $m(q)\leq p$. In case $p=2\leq m$, $(1,1,0,\ldots,0)$ is an isotropic vector of $V$.
\end{proof}

\begin{example}
Assume $(q,m)=(2,2)$. Then the Hasse diagram of $\mathbf L(\mathbf V)$ looks as follows {\rm(}see Fig.~3{\rm)}:

\vspace*{-4mm}

\begin{center}
\setlength{\unitlength}{7mm}
\begin{picture}(6,6)
\put(3,1){\circle*{.3}}
\put(1,3){\circle*{.3}}
\put(3,3){\circle*{.3}}
\put(5,3){\circle*{.3}}
\put(3,5){\circle*{.3}}
\put(3,1){\line(-1,1)2}
\put(3,1){\line(0,1)4}
\put(3,1){\line(1,1)2}
\put(3,5){\line(-1,-1)2}
\put(3,5){\line(1,-1)2}
\put(2.55,.2){$\{\vec0\}$}
\put(.3,2.8){$A$}
\put(3.2,2.8){$B$}
\put(5.25,2.8){$C$}
\put(2.8,5.35){$V$}
\put(2.2,-.8){{\rm Fig.~3}}
\end{picture}
\end{center}

\vspace*{3mm}

where
\begin{align*}
A & :=\{(0,0),(0,1)\}, \\
B & :=\{(0,0),(1,0)\}, \\
C & :=\{(0,0),(1,1)\}.
\end{align*}
Hence $(L(\mathbf V),+,\cap)\cong\mathbf M_3$. Moreover,
\[
\begin{array}{c|cccc}
   U    & A & B & C \\
\hline
U^\perp & B & A & C
\end{array}
\]
and hence $\mathbf L(\mathbf V)$ is not orthomodular. This is in accordance with the fact that $(1,1)$ is an isotropic vector of $V$.
\end{example}

In some cases, we can find a smaller upper bound for $m(q)$.

\begin{theorem}\label{th5}
\
\begin{enumerate}[{\rm(i)}]
\item If $6|(p-1)(2p-1)$ then $m(q)\leq p-1$,
\item if $p>2$ and $24|(p+1)(p-1)$ then $m(q)\leq(p-1)/2$.
\end{enumerate}
\end{theorem}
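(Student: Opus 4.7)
My plan is to exhibit, in each case, an explicit isotropic vector in $V$ whose coordinates lie in the prime subfield ${\rm GF}(p)\subseteq{\rm GF}(q)$. By the characterization of $m(q)$ obtained in the proof of Proposition~\ref{prop1}, this reduces for (i) to producing $p-1$ nonzero elements of ${\rm GF}(q)$ whose squares sum to zero, and for (ii) to producing $(p-1)/2$ such elements. The natural candidates are simply $1,2,\ldots$ viewed modulo $p$, and the task becomes checking that the sum of their squares vanishes in ${\rm GF}(p)$.

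The main tool I would invoke is the elementary identity
\[
\sum_{i=1}^{N}i^2=\frac{N(N+1)(2N+1)}{6},
\]
applied to $N=p-1$ in case (i) and $N=(p-1)/2$ in case (ii), followed by a comparison with the stated divisibility hypotheses.

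For (i) this gives
\[
\sum_{i=1}^{p-1}i^2=\frac{(p-1)\,p\,(2p-1)}{6}=p\cdot\frac{(p-1)(2p-1)}{6},
\]
and the hypothesis $6\mid(p-1)(2p-1)$ makes the second factor an integer. Hence the sum is divisible by $p$ and therefore equals $0$ in ${\rm GF}(p)$; since the residues $1,2,\ldots,p-1$ are all nonzero, $(1,2,\ldots,p-1)$ is an isotropic vector and $m(q)\leq p-1$. For (ii), $N=(p-1)/2$ is an integer (as $p>2$), and the same identity yields
\[
\sum_{i=1}^{(p-1)/2}i^2=\frac{p(p-1)(p+1)}{24},
\]
which by the hypothesis $24\mid(p-1)(p+1)$ is again $p$ times an integer. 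The same argument then furnishes an isotropic vector of length $(p-1)/2$ with nonzero ${\rm GF}(p)$-entries, giving $m(q)\leq(p-1)/2$.

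I do not foresee a substantial obstacle: the proof is essentially the recognition of a standard power-sum identity. The only subtlety worth flagging is that the divisibility conditions in (i) and (ii) are exactly what is needed to clear the denominators $6$ and $24$ so that the factor of $p$ in the numerator is preserved modulo $p$; this implicitly uses that the hypotheses force $p\geq5$, so that $\gcd(p,6)=\gcd(p,24)=1$ and the $p$ in the numerator cannot be absorbed by the denominator.
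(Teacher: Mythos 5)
Your proposal is correct and coincides with the paper's own proof: both exhibit the isotropic vectors $(1,2,\ldots,p-1,0,\ldots,0)$ and $(1,2,\ldots,(p-1)/2,0,\ldots,0)$ and verify isotropy via the power-sum identity $\sum_{i=1}^N i^2=N(N+1)(2N+1)/6$, with the divisibility hypotheses clearing the denominators so that the sum is a multiple of $p$. Your closing remark about $\gcd(p,6)=\gcd(p,24)=1$ is a harmless extra observation; the argument already goes through once the hypothesis makes the quotient an integer.
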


\begin{proof}
In both cases we construct a suitable isotropic vector of $V$.
\begin{enumerate}[(i)]
\item If $6|(p-1)(2p-1)$ and $m\geq p-1$ then because of
\[
\sum_{i=1}^{p-1}i^2=\frac{(p-1)(2p-1)}6p=0
\]
the vector $(1,2,3,\ldots,p-1,0,\ldots,0)\in V$ is isotropic.
\item If $p>2$, $24|(p+1)(p-1)$ and $m\geq(p-1)/2$ then because of
\[
\sum_{i=1}^{(p-1)/2}i^2=\frac{(p+1)(p-1)}{24}p=0
\]
the vector $(1,2,3,\ldots,(p-1)/2,0,\ldots,0)\in V$ is isotropic.
\end{enumerate}
\end{proof}

\begin{corollary}
From {\rm(i)} of Theorem~\ref{th5} we obtain $m(q)\leq p-1$ if $p>2$ and $p\equiv-1\mod3$.
\end{corollary}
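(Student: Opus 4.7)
The plan is to reduce the corollary directly to Theorem~\ref{th5}(i) by verifying its hypothesis $6\mid(p-1)(2p-1)$ from the given congruence conditions on $p$.

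First I would handle divisibility by $2$: since $p>2$ is prime, $p$ is odd, so $p-1$ is even, giving $2\mid(p-1)$. Next I would handle divisibility by $3$: the assumption $p\equiv-1\pmod3$ (i.e.\ $p\equiv2\pmod3$) yields
\[
2p-1\equiv 2\cdot 2-1=3\equiv 0\pmod 3,
\]
so $3\mid(2p-1)$. Combining these two coprime divisibilities gives $6\mid(p-1)(2p-1)$.

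Having established the hypothesis of Theorem~\ref{th5}(i), I would invoke that theorem to conclude $m(q)\leq p-1$, which is exactly the statement of the corollary.

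There is essentially no obstacle here: the corollary is purely an elementary number-theoretic specialization of Theorem~\ref{th5}(i), and the only thing to do is observe that $p\equiv-1\pmod 3$ forces $3\mid 2p-1$ rather than $3\mid p-1$ (the alternative case $p\equiv 1\pmod 3$ would also yield $6\mid(p-1)(2p-1)$ via the other factor, but is not needed here).
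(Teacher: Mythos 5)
Your proof is correct and is exactly the intended argument: the paper states the corollary without proof as an immediate consequence of Theorem~\ref{th5}(i), and your verification that $p>2$ gives $2\mid(p-1)$ while $p\equiv-1\pmod 3$ gives $3\mid(2p-1)$, hence $6\mid(p-1)(2p-1)$, is the routine check being left to the reader.
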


For small numbers $q$ we can compute $m(q)$ as follows.

\begin{example}
The following table shows the values of $m(q)$ for small $q$ and a corresponding isotropic vector of minimal dimension.
\[
\begin{array}{r|c|l}
 q & m(q) & \\
\hline
 2 &   2 & (1,1) \\
 3 &   3 & (1,1,1) \\
 4 &   2 & (1,1) \\
 5 &   2 & (1,2) \\
 7 &   3 & (1,2,3) \\
 8 &   2 & (1,1) \\
 9 &   2 & (1,x) \\
11 &   3 & (1,1,3) \\
13 &   2 & (2,3) \\
16 &   2 & (1,1) \\
17 &   2 & (1,4)
\end{array}
\]
Here we used ${\rm GF}(9)\cong\mathbb Z_3/(x^2+1)$ and neither ${\rm GF}(9)\cong\mathbb Z_3/(x^2+x-1)$ nor ${\rm GF}(9)\cong\mathbb Z_3/(x^2-x-1)$.
\end{example}

Our next task is the description of $\mathbf L(\mathbf V)$. We determine the number of $d$-dimensional linear subspaces of $\mathbf V$ as well as the numbers of covers in $\mathbf L(\mathbf V)$. For this reason, we introduce the numbers $a_n$ as follows:

Put $a_0:=1$ and
\[
a_n:=\prod_{i=1}^n(q^i-1)
\]
for all $n\in\mathbb N$.

Recall from \cite{Bi} that a {\em lattice} with $0$ is called {\em atomistic} if every of its elements is a join of atoms.

\begin{theorem}\label{th1}
Let $d\in\{0,\ldots,m\}$ and $\mathbf V$ be an $m$-dimensional vector space over ${\rm GF}(q)$. Then the following hold:
\begin{enumerate}[{\rm(i)}]
\item $\mathbf L(\mathbf V)$ is an atomistic modular lattice,
\item For every element $U\in L(\mathbf V)$, all maximal chains between $\{\vec0\}$ and $U$ have the same length {\rm(}{\em Chain condition}{\rm)},
\item $\mathbf V$ has exactly $a_m/(a_da_{m-d})$ $d$-dimensional linear subspaces.
\item If $d<m$ then every $d$-dimensional linear subspace of $\mathbf V$ is contained in exactly
\[
\frac{q^{m-d}-1}{q-1}=1+q+q^2+\cdots+q^{m-d-1}
\]
$(d+1)$-dimensional linear subspaces of $\mathbf V$.
\item If $d>0$ then every $d$-dimensional linear subspace of $\mathbf V$ contains exactly
\[
\frac{q^d-1}{q-1}=1+q+q^2+\cdots+q^{d-1}
\]
$(d-1)$-dimensional linear subspaces of $\mathbf V$.
\item The lattice $\mathbf L(\mathbf V)$ has exactly
\[
\frac{q^m-1}{q-1}=1+q+q^2+\cdots+q^{m-1}
\]
atoms, namely the one-dimensional linear subspaces of $\mathbf V$.
\item If $m=2$ then $(L(\mathbf V),+,\cap)\cong\mathbf M_{q+1}$.
\end{enumerate}
\end{theorem}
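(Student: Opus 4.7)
The plan is to treat (i)--(ii) as structural observations, prove (iii) by a standard double-counting of ordered bases, and then obtain (iv)--(vii) as direct specializations or quotient-arguments.

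For (i), modularity has already been recorded in the introduction, so only atomisticity needs attention: every $U\in L(\mathbf V)$ is generated by its nonzero vectors, and each nonzero vector $\vec a$ spans a one-dimensional subspace $\langle\{\vec a\}\rangle\subseteq U$. Hence $U$ is the join of its one-dimensional subspaces, which are precisely the atoms of $\mathbf L(\mathbf V)$ below $U$. For (ii), I would note that $\dim\colon L(\mathbf V)\to\{0,\dots,m\}$ is strictly isotone and satisfies $\dim(U+W)+\dim(U\cap W)=\dim U+\dim W$, so $\dim$ is a rank function; in particular every maximal chain from $\{\vec 0\}$ to $U$ has length exactly $\dim U$.

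For (iii) I would use the classical double-count. A $d$-tuple of linearly independent vectors in $V$ can be chosen in
\[
(q^m-1)(q^m-q)(q^m-q^2)\cdots(q^m-q^{d-1})
\]
ways (pick each vector outside the span of its predecessors), while inside a fixed $d$-dimensional subspace $U$ the number of ordered bases equals $(q^d-1)(q^d-q)\cdots(q^d-q^{d-1})$. Dividing, the number of $d$-dimensional subspaces is
\[
\frac{(q^m-1)(q^m-q)\cdots(q^m-q^{d-1})}{(q^d-1)(q^d-q)\cdots(q^d-q^{d-1})}.
\]
Factoring $q^{\binom{d}{2}}$ out of both numerator and denominator and rewriting the remaining factors as $(q^m-1)(q^{m-1}-1)\cdots(q^{m-d+1}-1)$ and $(q^d-1)(q^{d-1}-1)\cdots(q-1)$ respectively, this collapses to $a_m/(a_d a_{m-d})$. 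This algebraic rearrangement is the only place where any real calculation occurs and is essentially the only potential obstacle.

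Parts (v), (iv), (vi), (vii) then follow quickly. For (v), the $(d-1)$-dimensional subspaces of a fixed $d$-dimensional $U$ are exactly those given by (iii) applied with $(m,d)$ replaced by $(d,d-1)$, which simplifies to $a_d/(a_{d-1}a_1)=(q^d-1)/(q-1)$. For (iv), the $(d+1)$-dimensional superspaces of a fixed $d$-dimensional $U$ correspond bijectively (via $W\mapsto W/U$) to the one-dimensional subspaces of $\mathbf V/U$, which has dimension $m-d$; hence the count is $(q^{m-d}-1)/(q-1)$ by the atom count in $\mathbf V/U$. Statement (vi) is the case $d=1$ of (iii) (or $d=0$ of (iv)). Finally, for (vii), when $m=2$, (vi) gives $q+1$ atoms, each of which is covered only by $V$ (by (iv) with $d=1$, $m-d=1$), and any two distinct atoms $U_1\ne U_2$ satisfy $U_1\cap U_2=\{\vec 0\}$ and hence $U_1+U_2=V$ by the dimension formula; this is exactly the lattice $\mathbf M_{q+1}$.
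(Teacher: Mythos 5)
Your proposal is correct, and it is genuinely more self-contained than what the paper does: the paper's ``proof'' of this theorem is essentially a list of citations ((i) and (ii) declared well-known, (iii) and (v) referred to Theorem~1 of the authors' earlier paper, (iv) to Theorem~23 there, with (vi) and (vii) derived as special cases), whereas you actually supply the arguments. Your double count of ordered linearly independent $d$-tuples for (iii) is the classical computation, and your simplification to $a_m/(a_d a_{m-d})$ checks out: factoring $q^{0+1+\cdots+(d-1)}$ from numerator and denominator leaves $(q^m-1)\cdots(q^{m-d+1}-1)=a_m/a_{m-d}$ over $a_d$. Your derivations of (iv) via the correspondence $W\mapsto W/U$ with the quotient space, of (v) by specializing (iii), and of (vii) from the fact that in dimension $2$ the proper nontrivial subspaces are exactly the $q+1$ atoms, pairwise meeting in $\{\vec0\}$ and joining to $V$, are all sound. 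The only difference in organization worth noting is that the paper obtains (vi) as the case $d=0$ of (iv) and then (vii) from (vi), while you get (vi) directly from (iii) with $d=1$; both are immediate. What your route buys is a proof readable without consulting \cite{CL19}; what the paper's route buys is brevity.
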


\begin{proof}
(i) and (ii) are well-known, (iii) is Theorem~1 in \cite{CL19}, (iv) is the special case $e:=d+1$ of Theorem~23 in \cite{CL19}, (v) is a special case of Theorem~1 in \cite{CL19} when considering the number of $(d-1)$-dimensional linear subspaces of a $d$-dimensional vector space over ${\rm GF}(q)$, (vi) is the special case $d:=0$ of (iv) and (vii) follows from (vi).
\end{proof}

The following concept will be used in the sequel.

\begin{definition}
An $m$-element subset $\{\vec b_1,\ldots,\vec b_m\}$ of an $m$-dimensional vector space $V$ over ${\rm GF}(q)$ is called an {\em orthogonal basis} of $\mathbf V$ if
\[
\vec b_i\vec b_j\left\{
\begin{array}{ll}
\neq0 & \text{if }i=j \\
=0    & \text{otherwise}
\end{array}
\right.
\]
\end{definition}

\begin{example}
\
\begin{itemize}
\item $\{(1,0,\ldots,0),(0,1,0,\ldots,0),\ldots,(0,\ldots,0,1)\}$ is an orthogonal basis of $\mathbf V$ for arbitrary $p$,
\item $\{(0,1,\ldots,1),(1,0,1,\ldots,1),\ldots,(1,\ldots,1,0)\}$ is an orthogonal basis of $\mathbf V$ if and only if $p|m-2$.
\end{itemize}
\end{example}

\begin{lemma}\label{lem1}
Let $B=\{\vec b_1,\ldots,\vec b_m\}$ be an orthogonal basis of $\mathbf V$ and $I\subseteq\{1,\ldots,m\}$. Then
\begin{enumerate}[{\rm(i)}]
\item $B$ is a basis of $\mathbf V$,
\item $\langle\{\vec b_i\mid i\in I\}\rangle^\perp=\langle\{\vec b_i\mid i\in\{1,\ldots,m\}\setminus I\}\rangle$.
\end{enumerate}
\end{lemma}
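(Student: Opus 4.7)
The plan is to handle the two parts in order; both are short linear-algebra arguments, and I do not expect a serious obstacle here.

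For part (i), I will show directly that $B$ is linearly independent, and then invoke the fact noted in the paper that any $m$ linearly independent vectors of $V$ form a basis of $\mathbf V$. Suppose $\sum_{i=1}^m c_i\vec b_i=\vec 0$ with $c_i\in{\rm GF}(q)$. Taking the inner product with $\vec b_j$ and using that $B$ is an orthogonal basis gives
\[
0=\Bigl(\sum_{i=1}^m c_i\vec b_i\Bigr)\vec b_j=\sum_{i=1}^m c_i(\vec b_i\vec b_j)=c_j(\vec b_j\vec b_j),
\]
and since $\vec b_j\vec b_j\neq 0$ we conclude $c_j=0$ for each $j$. Thus $B$ is linearly independent and hence a basis.

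For part (ii), put $J:=\{1,\ldots,m\}\setminus I$. The inclusion $\langle\{\vec b_j\mid j\in J\}\rangle\subseteq\langle\{\vec b_i\mid i\in I\}\rangle^\perp$ is immediate from the definition of an orthogonal basis: for every $j\in J$ and $i\in I$ we have $\vec b_j\vec b_i=0$ (since $j\neq i$), so each generator of the left-hand side is orthogonal to each generator of $\langle\{\vec b_i\mid i\in I\}\rangle$, and hence to the whole subspace.

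To finish, I argue by dimensions. By (i), the vectors in $B$ are linearly independent, so $\dim\langle\{\vec b_i\mid i\in I\}\rangle=|I|$ and $\dim\langle\{\vec b_j\mid j\in J\}\rangle=|J|=m-|I|$. Using the standard identity $\dim U+\dim U^\perp=m$ (already used in the proof of Theorem~\ref{th3}), we get $\dim\langle\{\vec b_i\mid i\in I\}\rangle^\perp=m-|I|=|J|$. Since the subspace on the left of the inclusion established above has the same dimension as the one on the right, equality must hold, proving (ii).
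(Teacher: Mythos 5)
Your proof is correct. Part (i) is essentially the paper's argument: expand a vanishing linear combination, pair with each $\vec b_j$, and use $\vec b_j\vec b_j\neq0$. Part (ii), however, takes a genuinely different route. The paper writes an arbitrary $\vec a\in V$ as $a_1\vec b_1+\cdots+a_m\vec b_m$ (using (i)) and runs a chain of equivalences: $\vec a\in\langle\{\vec b_i\mid i\in I\}\rangle^\perp$ iff $\vec a\vec b_i=0$ for all $i\in I$ iff $a_i\vec b_i\vec b_i=0$ for all $i\in I$ iff $a_i=0$ for all $i\in I$ iff $\vec a\in\langle\{\vec b_i\mid i\in\{1,\ldots,m\}\setminus I\}\rangle$. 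That establishes both inclusions simultaneously and uses nothing beyond $\vec b_i\vec b_i\neq0$. You instead prove only the easy inclusion $\langle\{\vec b_j\mid j\in J\}\rangle\subseteq\langle\{\vec b_i\mid i\in I\}\rangle^\perp$ from orthogonality and then close the gap by comparing dimensions via $\dim U+\dim U^\perp=m$. That identity (nondegeneracy of the standard bilinear form on $({\rm GF}(q))^m$) is indeed invoked elsewhere in the paper, in the proof of Theorem~\ref{th3}, so your reliance on it is consistent with the paper's conventions; it buys you brevity and avoids expanding a general element of the orthogonal complement in the basis $B$. The paper's equivalence chain is more self-contained at this point and yields an explicit coordinate description of the orthogonal complement as a by-product. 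Both arguments are complete and valid.
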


\begin{proof}
Assume $a_1,\ldots,a_m\in{\rm GF}(q)$ and put $\vec a:=a_1\vec b_1+\cdots+a_m\vec b_m$.
\begin{enumerate}[(i)]
\item If $\vec a=\vec0$ then $a_i\vec b_i\vec b_i=\vec a\vec b_i=\vec0\vec b_i=0$ for all $i=1,\ldots,m$ and hence $a_1=\cdots=a_m=0$ showing the independence of $\vec b_1,\ldots,\vec b_m$.
\item The following are equivalent:
\begin{align*}
             \vec a & \in\langle\{\vec b_i\mid i\in I\}\rangle^\perp, \\
     \vec a\vec b_i & =0\text{ for all }i\in I, \\
a_i\vec b_i\vec b_i & =0\text{ for all }i\in I, \\
                a_i & =0\text{ for all }i\in I, \\
             \vec a & \in\langle\{\vec b_i\mid i\in\{1,\ldots,m\}\setminus I\}\rangle.
\end{align*}
\end{enumerate}
\end{proof}

Denote by $\mathbf 2^k$ the finite Boolean lattice (Boolean algebra) having just $k$ atoms. In what follows we will check when $\mathbf L(\mathbf V)$ for an $m$-dimensional vector space $\mathbf V$ over ${\rm GF}(q)$ contains a subalgebra isomorphic to $\mathbf 2^k$ for some $k\leq m$.

\begin{theorem}\label{th4}
Let $\{\vec b_1,\ldots,\vec b_m\}$ be an orthogonal basis of $\mathbf V$. Then the subalgebra of $\mathbf L(\mathbf V)$ generated by $\{\langle\{\vec b_1\}\rangle,\ldots,\langle\{\vec b_m\}\rangle\}$ is isomorphic to $\mathbf2^m$. Since $\mathbf2^m$ contains subalgebras isomorphic to $\mathbf2^i$ for every $i=1,\ldots,m$, this is also true for $\mathbf L(\mathbf V)$.
\end{theorem}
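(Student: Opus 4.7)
The plan is to exhibit explicitly the subalgebra generated by the atoms $\langle\{\vec b_1\}\rangle,\ldots,\langle\{\vec b_m\}\rangle$ and show that it is in bijective correspondence with the power set of $\{1,\ldots,m\}$ under a map which respects $+$, $\cap$, and $^\perp$. Concretely, for every $I\subseteq\{1,\ldots,m\}$ set $U_I:=\langle\{\vec b_i\mid i\in I\}\rangle$ (with $U_\emptyset:=\{\vec 0\}$), and let $S:=\{U_I\mid I\subseteq\{1,\ldots,m\}\}$. I would first verify that $S$ is closed under the three operations: $U_I+U_J=U_{I\cup J}$ is immediate from the definition of the span, $U_I\cap U_J=U_{I\cap J}$ follows because $\{\vec b_1,\ldots,\vec b_m\}$ is a basis by Lemma~\ref{lem1}(i) (so the coordinate representation with respect to $B$ is unique and a vector lies in $U_I$ iff its coordinates vanish outside $I$), and closure under $^\perp$ is precisely Lemma~\ref{lem1}(ii), which gives $U_I^\perp=U_{\{1,\ldots,m\}\setminus I}$. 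Together with $U_\emptyset=\{\vec 0\}$ and $U_{\{1,\ldots,m\}}=V$, this shows $S$ is a subalgebra of $\mathbf L(\mathbf V)$.

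Next I would check that $S$ is exactly the subalgebra generated by $\{\langle\{\vec b_1\}\rangle,\ldots,\langle\{\vec b_m\}\rangle\}$. One inclusion is clear since the generators are the $U_{\{i\}}$ and $S$ is closed under the operations; the other follows because each $U_I$ is a finite join of the generators, $U_I=\sum_{i\in I}\langle\{\vec b_i\}\rangle$, and $U_\emptyset=U_{\{1,\ldots,m\}}^\perp$.

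Finally, define $\varphi\colon(\mathcal P(\{1,\ldots,m\}),\cup,\cap,{}^c,\emptyset,\{1,\ldots,m\})\to S$ by $\varphi(I):=U_I$. Surjectivity is the definition of $S$, and injectivity is what needs the independence of $B$: if $U_I=U_J$, then for every $i\in I\setminus J$ the vector $\vec b_i$ would be a linear combination of $\{\vec b_j\mid j\in J\}$, contradicting Lemma~\ref{lem1}(i). The three equalities proven in the first paragraph say precisely that $\varphi$ preserves $\cup\mapsto+$, $\cap\mapsto\cap$, and $^c\mapsto{}^\perp$, while $\varphi(\emptyset)=\{\vec 0\}$ and $\varphi(\{1,\ldots,m\})=V$. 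Hence $\varphi$ is an isomorphism of bounded lattices with involution, so $S\cong\mathbf 2^m$. The second assertion is trivial: restricting $\varphi$ to the power set of any $i$-element subset of $\{1,\ldots,m\}$ yields a subalgebra of $S$ isomorphic to $\mathbf 2^i$.

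There is no real obstacle; the only point requiring care is to identify $S$ as the \emph{whole} subalgebra generated by the specified atoms (so one must verify closure under all three operations, not just under $+$), and to invoke Lemma~\ref{lem1}(i) to guarantee that different index sets give different subspaces.
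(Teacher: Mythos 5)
Your proposal is correct and follows essentially the same route as the paper: both identify the generated subalgebra as $S=\{\langle\{\vec b_i\mid i\in I\}\rangle\mid I\subseteq\{1,\ldots,m\}\}$ and exhibit $I\mapsto U_I$ as an isomorphism from the power-set algebra, relying on Lemma~\ref{lem1} for linear independence and for $U_I^\perp=U_{\{1,\ldots,m\}\setminus I}$. You simply spell out the closure and injectivity checks that the paper leaves as ``easy to see.''
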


\begin{proof}
Let $S$ denote the subuniverse of $\mathbf L(\mathbf V)$ generated by $\{\langle\{\vec b_1\}\rangle,\ldots,\langle\{\vec b_m\}\rangle\}$. Using Lemma~\ref{lem1} it is easy to see that $S=\{\langle\{\vec b_i\mid i\in I\}\rangle\mid I\subseteq\{1,\ldots,m\}\}$, that $S$ is a subuniverse of $\mathbf L(\mathbf V)$ and that $I\mapsto\langle\{\vec b_i\mid i\in I\}\rangle$ is an isomorphism from
\[
(2^{\{1,\ldots,m\}},\cup,\cap,I\mapsto\{1,\ldots,m\}\setminus I,\emptyset,\{1,\ldots,m\}\}
\]
to $(S,+,\cap,{}^\perp,\{\vec0\},V)$. This shows $(S,+,\cap,{}^\perp,\{\vec0\},V)\cong\mathbf2^m$. The rest of the proof is clear.
\end{proof}

As shown by Proposition~\ref{prop1}, if the dimension of $\mathbf V$ is small enough then $\mathbf L(\mathbf V)$ is an orthomodular lattice. Now we show when $\mathbf L(\mathbf V)$ contains an orthomodular lattice isomorphic to a horizontal sum of Boolean algebras also for an arbitrary dimension of $\mathbf V$ that is not a multiple of $p$.

Let $\mathbf L_i=(L_i,\vee_i,\wedge_i,{}'^{_i},0,1)$ ($i=1,2$) be non-trivial orthomodular lattices satisfying $L_1\cap L_2=\{0,1\}$. Then their {\em horizontal sum} $\mathbf L_1+\mathbf L_2=(L,\vee,\wedge,{}',0,1)$ is defined by
\begin{align*}
L & :=L_1\cup L_2, \\
x\vee y   & :=\left\{
\begin{array}{ll}
x\vee_iy & \text{if }i\in\{1,2\}\text{ and }x,y\in L_i \\
1        & \text{otherwise}
\end{array}
\right. \\
x\wedge y & :=\left\{
\begin{array}{ll}
x\wedge_iy & \text{if }i\in\{1,2\}\text{ and }x,y\in L_i \\
0          & \text{otherwise}
\end{array}
\right. \\
x'        & :=x'^{_i}\text{ if }i\in\{1,2\}\text{ and }x\in L_i
\end{align*}
($x,y\in L$). It is well-known that the horizontal sum of two orthomodular lattices is again an orthomodular lattice.

\begin{theorem}\label{th2}
Assume $p\nmid m$. Then there exists a subset $S$ of $V$ such that $(S,\subseteq,{}^\perp,\{\vec0\},V)$ is an orthomodular lattice isomorphic to the horizontal sum of the Boolean algebras $\mathbf2^m$ and $\mathbf2^2$. The presented set $S$ is a subuniverse of $\mathbf L(\mathbf V)$ if and only if $m=2$.
\end{theorem}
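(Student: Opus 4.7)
The plan is to build $S$ by adjoining to the Boolean subalgebra from Theorem~\ref{th4} a single pair of complementary elements coming from an ``off-basis'' direction. Concretely I would take the canonical orthogonal basis $\{\vec e_1,\ldots,\vec e_m\}$ (orthogonal in every characteristic) and let $S_0$ be the $2^m$-element Boolean subalgebra it generates, as guaranteed by Theorem~\ref{th4}. The hypothesis $p\nmid m$ is then used precisely to produce a non-isotropic ``diagonal'' vector: set $\vec u:=(1,\ldots,1)$, so that $\vec u\vec u=m\neq 0$ in $\mathrm{GF}(q)$; put $U:=\langle\{\vec u\}\rangle$ and define
\[
S:=S_0\cup\{U,U^\perp\}.
\]
By the dimension argument already used in the proof of Theorem~\ref{th3}, non-isotropy of $\vec u$ gives $U\cap U^\perp=\{\vec 0\}$ and $U+U^\perp=V$, so $\{\{\vec 0\},U,U^\perp,V\}$ is a copy of $\mathbf 2^2$ sitting inside $\mathbf L(\mathbf V)$.

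The core step is a collection of incomparability checks that identify the induced poset on $S$ with the horizontal sum of $S_0\cong\mathbf 2^m$ and this $\mathbf 2^2$. Writing $W_I:=\langle\{\vec e_i\mid i\in I\}\rangle$, I would show that $U$ and $U^\perp$ are incomparable with every $W_I\in S_0\setminus\{\{\vec 0\},V\}$. For $U$ this uses that $\vec u$ has \emph{all} coordinates nonzero (so $\vec u\in W_I$ forces $I=\{1,\ldots,m\}$) and that $\vec e_i\in U$ would require $\vec e_i$ to be a scalar multiple of $\vec u$, which is impossible for $m>1$. For $U^\perp$ the argument is dual: $W_I\subseteq U^\perp$ would give $\vec e_i\vec u=1=0$ in $\mathrm{GF}(q)$, while $U^\perp\subseteq W_I$ with $|I|=m-1$, say $I=\{1,\ldots,m\}\setminus\{j\}$, is contradicted by $\vec e_j-\vec e_k\in U^\perp$ for any $k\neq j$. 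Once these incomparabilities are in hand, the meets and joins in the poset $(S,\subseteq)$ between elements of $S_0\setminus\{\{\vec 0\},V\}$ and $\{U,U^\perp\}$ are forced to be $\{\vec 0\}$ and $V$ respectively, which is exactly the horizontal sum pattern; orthomodularity is then inherited from the known fact that the horizontal sum of orthomodular lattices is orthomodular.

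For the ``subuniverse'' equivalence, the forward implication reuses the same incomparability analysis: when $m=2$ the only potentially new operations are $W\vee U$, $W\wedge U$, $W\vee U^\perp$, $W\wedge U^\perp$ with $W\in\{\langle\{\vec e_1\}\rangle,\langle\{\vec e_2\}\rangle\}$, and a $\dim$-count in each case returns $V$ or $\{\vec 0\}$, both in $S$. For the converse, I would exhibit a single explicit failure when $m\geq 3$: the sum $\langle\{\vec e_1\}\rangle+U$ is the $2$-dimensional plane spanned by $\vec e_1$ and $\vec e_2+\cdots+\vec e_m$, which is a proper subspace of $V$, is not of the form $W_I$ (the vector $\vec e_2+\cdots+\vec e_m$ sits in no $W_I$ that also contains $\vec e_1$ and is proper), and is clearly neither $U$ nor $U^\perp$.

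The main obstacle is the incomparability block: it is what makes the set-theoretic union $S_0\cup\{U,U^\perp\}$ actually realize the horizontal sum as a subposet and simultaneously dictates the ``$m=2$'' threshold in the closure statement. Everything else—the cardinality $2^m+2$, the identification of the $\mathbf 2^m$ piece, the orthomodularity of the horizontal sum, and the dimension computations—is either a direct appeal to Theorem~\ref{th4} or routine linear algebra.
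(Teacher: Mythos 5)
Your proposal is correct and follows essentially the same route as the paper: the same set $S$ built from the canonical orthogonal basis together with $\langle\{(1,\ldots,1)\}\rangle$ and its orthocomplement, the same use of $p\nmid m$ to keep the diagonal line non-isotropic, the same incomparability checks yielding the horizontal sum, and the same dimension count ($\dim(W+U_{\{1\}})=2<m$) for the subuniverse dichotomy. You merely spell out in more detail the incomparability assertions that the paper dismisses with ``Clearly''.
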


\begin{proof}
Put
\begin{align*}
       N & :=\{1,\ldots,m\}, \\
\vec e_i & :=(0,\ldots,0,1,0,\ldots,0),\text{ with }1\text{ at place }i,\text{ for all }i\in N \\
     U_I & :=\langle\{\vec e_i\mid i\in I\}\rangle\text{ for all }I\subseteq N, \\
       W & :=\langle\{(1,\ldots,1)\}\rangle, \\
       S & :=\{U_I\mid I\subseteq N\}\cup\{W,W^\perp\}.
\end{align*}
From Theorem~\ref{th4} we have that $\{U_I\mid I\subseteq N\}$ is a subuniverse of $\mathbf L(\mathbf V)$ and $I\mapsto U_I$ is an isomorphism from $(2^N,\cup,\cap,I\mapsto N\setminus I,\emptyset,N)$ to $(\{U_I\mid I\subseteq N\},+,\cap,{}^\perp,\{\vec0\},V)$. Clearly, $U_I\not\subseteq W,W^\perp\not\subseteq U_I$ for all $I\in2^N\setminus\{\emptyset,N\}$. This shows that in $(S,\subseteq,{}^\perp,\{\vec0\},V)$ the following hold for all $I\in2^N\setminus\{\emptyset,N\}$:
\begin{align*}
        W\vee U_I & =V, \\
      W\wedge U_I & =\{\vec0\}, \\
  W^\perp\vee U_I & =V, \\
W^\perp\wedge U_I & =\{\vec0\}.
\end{align*}
Moreover, $\dim W=1$ and $\dim W^\perp=m-1$. Since $p\nmid m$ we have $W\not\subseteq W^\perp$. In case $m=2$ we have for $i=1,2$
\begin{align*}
        W\vee U_{\{i\}} & =W+U_{\{i\}}=V, \\
      W\wedge U_{\{i\}} & =W\cap U_{\{i\}}=\{\vec0\}, \\
  W^\perp\vee U_{\{i\}} & =W+U_{\{i\}}=V, \\
W^\perp\wedge U_{\{i\}} & =W\cap U_{\{i\}}=\{\vec0\}
\end{align*}
and hence $S$ is a subuniverse of $\mathbf L(\mathbf V)$. If, however, $m>2$ then
\[
W+U_{\{1\}}\subsetneqq V=W\vee U_{\{1\}}
\]
since $\dim(W+U_{\{1\}})=2<m$. This shows that in this case $S$ is not a subuniverse of $\mathbf L(\mathbf V)$.
\end{proof}

The following  example shows a lattice of the form $\mathbf L(\mathbf V)$ that is not orthomodular, but contains a not Boolean orthomodular lattice as a subposet.

\begin{example}
Assume $(q,m)=(2,3)$. Then the Hasse diagram of $\mathbf L(\mathbf V)$ looks as follows {\rm(}see Fig.~4{\rm)}:

\vspace*{-4mm}

\begin{center}
\setlength{\unitlength}{7mm}
\begin{picture}(14,8)
\put(7,1){\circle*{.3}}
\put(1,3){\circle*{.3}}
\put(3,3){\circle*{.3}}
\put(5,3){\circle*{.3}}
\put(7,3){\circle*{.3}}
\put(9,3){\circle*{.3}}
\put(11,3){\circle*{.3}}
\put(13,3){\circle*{.3}}
\put(1,5){\circle*{.3}}
\put(3,5){\circle*{.3}}
\put(5,5){\circle*{.3}}
\put(7,5){\circle*{.3}}
\put(9,5){\circle*{.3}}
\put(11,5){\circle*{.3}}
\put(13,5){\circle*{.3}}
\put(7,7){\circle*{.3}}
\put(7,1){\line(-3,1)6}
\put(7,1){\line(-2,1)4}
\put(7,1){\line(-1,1)2}
\put(7,1){\line(0,1)6}
\put(7,1){\line(1,1)2}
\put(7,1){\line(2,1)4}
\put(7,1){\line(3,1)6}
\put(7,7){\line(-3,-1)6}
\put(7,7){\line(-2,-1)4}
\put(7,7){\line(-1,-1)2}
\put(7,7){\line(1,-1)2}
\put(7,7){\line(2,-1)4}
\put(7,7){\line(3,-1)6}
\put(1,3){\line(0,1)2}
\put(1,3){\line(1,1)2}
\put(1,3){\line(2,1)4}
\put(3,3){\line(-1,1)2}
\put(3,3){\line(2,1)4}
\put(3,3){\line(3,1)6}
\put(5,3){\line(-2,1)4}
\put(5,3){\line(3,1)6}
\put(5,3){\line(4,1)8}
\put(7,3){\line(-2,1)4}
\put(7,3){\line(2,1)4}
\put(9,3){\line(-3,1)6}
\put(9,3){\line(0,1)2}
\put(9,3){\line(2,1)4}
\put(11,3){\line(-3,1)6}
\put(11,3){\line(-2,1)4}
\put(11,3){\line(1,1)2}
\put(13,3){\line(-4,1)8}
\put(13,3){\line(-2,1)4}
\put(13,3){\line(-1,1)2}
\put(6.55,.2){$\{\vec0\}$}
\put(.3,2.8){$A$}
\put(2.3,2.8){$B$}
\put(4.3,2.8){$C$}
\put(7.3,2.8){$D$}
\put(9.3,2.8){$E$}
\put(11.3,2.8){$F$}
\put(13.3,2.8){$G$}
\put(.3,4.8){$H$}
\put(2.3,4.8){$I$}
\put(4.3,4.8){$J$}
\put(7.3,4.8){$K$}
\put(9.3,4.8){$L$}
\put(11.3,4.8){$M$}
\put(13.3,4.8){$N$}
\put(6.8,7.35){$V$}
\put(6.25,-.7){{\rm Fig.~4}}
\end{picture}
\end{center}

\vspace*{3mm}

where
\begin{align*}
A & :=\{(0,0,0),(0,0,1)\}, \\
B & :=\{(0,0,0),(0,1,0)\}, \\
C & :=\{(0,0,0),(0,1,1)\}, \\
D & :=\{(0,0,0),(1,0,0)\}, \\
E & :=\{(0,0,0),(1,0,1)\}, \\
F & :=\{(0,0,0),(1,1,0)\}, \\
G & :=\{(0,0,0),(1,1,1)\}, \\
H & :=\{(0,0,0),(0,0,1),(0,1,0),(0,1,1)\}, \\
I & :=\{(0,0,0),(0,0,1),(1,0,0),(1,0,1)\}, \\
J & :=\{(0,0,0),(0,0,1),(1,1,0),(1,1,1)\}, \\
K & :=\{(0,0,0),(0,1,0),(1,0,0),(1,1,0)\}, \\
L & :=\{(0,0,0),(0,1,0),(1,0,1),(1,1,1)\}, \\
M & :=\{(0,0,0),(0,1,1),(1,0,0),(1,1,1)\}, \\
N & :=\{(0,0,0),(0,1,1),(1,0,1),(1,1,0)\}.
\end{align*}
Moreover,
\[
\begin{array}{c|ccccccc}
   U    & A & B & C & D & E & F & G \\
\hline
U^\perp & K & I & M & H & L & J & N
\end{array}
\]
Since $C+C^\perp=M\neq V$, $^\perp$ is not a complementation and hence $\mathbf L(\mathbf V)$ is not orthomodular. This is in accordance with the fact that $(1,1,0)$ is an isotropic vector of $V$. We have $p\nmid m$. Hence we can apply Theorem~\ref{th2}. The set $S$ of Theorem~\ref{th2} equals $\{\{\vec0\},A,B,D,G,H,I,K,N,V\}$ and the Hasse diagram of the orthomodular lattice $(S,\subseteq,^\perp,\{\vec0\},V)$ is visualized in Fig.~5:

\vspace*{-4mm}

\begin{center}
\setlength{\unitlength}{7mm}
\begin{picture}(8,8)
\put(4,1){\circle*{.3}}
\put(1,3){\circle*{.3}}
\put(3,3){\circle*{.3}}
\put(5,3){\circle*{.3}}
\put(7,3){\circle*{.3}}
\put(1,5){\circle*{.3}}
\put(3,5){\circle*{.3}}
\put(5,5){\circle*{.3}}
\put(7,5){\circle*{.3}}
\put(4,7){\circle*{.3}}
\put(4,1){\line(-3,2)3}
\put(4,1){\line(-1,2)1}
\put(4,1){\line(1,2)1}
\put(4,1){\line(3,2)3}
\put(4,1){\line(3,4)3}
\put(4,7){\line(-3,-2)3}
\put(4,7){\line(-1,-2)1}
\put(4,7){\line(1,-2)1}
\put(4,7){\line(3,-2)3}
\put(4,7){\line(3,-4)3}
\put(1,3){\line(0,1)2}
\put(1,3){\line(1,1)2}
\put(3,3){\line(-1,1)2}
\put(3,3){\line(1,1)2}
\put(5,3){\line(-1,1)2}
\put(5,3){\line(0,1)2}
\put(3.55,.2){$\{\vec0\}$}
\put(.3,2.8){$A$}
\put(2.3,2.8){$B$}
\put(4.3,2.8){$D$}
\put(7.3,2.8){$G$}
\put(.3,4.8){$H$}
\put(2.3,4.8){$I$}
\put(4.3,4.8){$K$}
\put(7.3,4.8){$N$}
\put(3.8,7.35){$V$}
\put(3.25,-.7){{\rm Fig.~5}}
\end{picture}
\end{center}

\vspace*{3mm}

Since $D+G=M\notin S$, $S$ is not a subuniverse of $\mathbf L(\mathbf V)$. One can easily see that this lattice is the horizontal sum of the Boolean lattices $\mathbf2^3$ and $\mathbf2^2$.
\end{example}

If $V$ does not contain an isotropic vector then $\mathbf L(\mathbf V)$ is orthomodular because $^\perp$ is an orthocomplementation and $\mathbf L(\mathbf V)$ is modular. This is the case in the following example.

\begin{example}
In Example~\ref{ex1} we have $p\nmid m$. Hence we can apply Theorem~\ref{th2}. The set $S$ of Theorem~\ref{th2} coincides with $L(\mathbf V)$ and is therefore trivially a subuniverse of $\mathbf L(\mathbf V)$.
\end{example}

In the rest of paper, we will investigate the lattice $\mathbf L(\mathbf V)$ for two-dimensional vector spaces $\mathbf V$ over ${\rm GF}(q)$.

For $n>1$ let $\mathbf{MO}_n$ denote the modular ortholattice with the following Hasse diagram (see Fig.~6):

\vspace*{-4mm}

\begin{center}
\setlength{\unitlength}{7mm}
\begin{picture}(10,6)
\put(5,1){\circle*{.3}}
\put(1,3){\circle*{.3}}
\put(3,3){\circle*{.3}}
\put(7,3){\circle*{.3}}
\put(9,3){\circle*{.3}}
\put(5,5){\circle*{.3}}
\put(5,1){\line(-2,1)4}
\put(5,1){\line(-1,1)2}
\put(5,1){\line(1,1)2}
\put(5,1){\line(2,1)4}
\put(5,5){\line(-2,-1)4}
\put(5,5){\line(-1,-1)2}
\put(5,5){\line(1,-1)2}
\put(5,5){\line(2,-1)4}
\put(4.85,.3){$0$}
\put(.3,2.8){$a_1$}
\put(2.2,2.8){$a_1^\perp$}
\put(4.65,2.8){$\cdots$}
\put(7.25,2.8){$a_n$}
\put(9.2,2.8){$a_n^\perp$}
\put(4.85,5.35){$1$}
\put(4.2,-.8){{\rm Fig.~6}}
\end{picture}
\end{center}

\vspace*{3mm}

\begin{theorem}
Let $\mathbf V$ be a $2$-dimensional vector space over the field ${\rm GF}(q)$ for some $q=p^n$, without loss of generality assume $V=({\rm GF}(q))^2$, and put $M:=\{(x,y)\in\mathbb N^2\mid x\leq y\leq p/2\}$. Then
\begin{enumerate}[{\rm(i)}]
\item If there exists some $(x,y)\in M$ with $p|(x^2+y^2)$ then $\mathbf L(\mathbf V)$ is not orthomodular,
\item if $q=p$ then $\mathbf L(\mathbf V)$ is orthomodular if and only if $p\nmid(x^2+y^2)$ for all $(x,y)\in M$.
\item if $\mathbf L(\mathbf V)$ is orthomodular then $\mathbf L(\mathbf V)\cong\mathbf{MO}_{(q+1)/2}$.
\end{enumerate}
\end{theorem}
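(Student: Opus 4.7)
The plan is to leverage the two preliminary results already established: Theorem~\ref{th3} (orthomodularity is equivalent to the absence of isotropic vectors) and Theorem~\ref{th1}(vii) (when $m=2$ the lattice reduct is $\mathbf M_{q+1}$). Parts (i) and (ii) amount to translating between isotropic vectors of $({\rm GF}(q))^2$ and pairs of integers in $M$, while (iii) combines the $\mathbf M_{q+1}$ structure with the atom-pairing forced by the orthocomplementation.

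For (i), if $(x,y)\in M$ satisfies $p\mid x^2+y^2$, I would view $(x,y)$ as an element of $({\rm GF}(p))^2\subseteq({\rm GF}(q))^2=V$. Since $x,y\geq 1$ it is nonzero, and since $x^2+y^2=0$ already holds in ${\rm GF}(p)$ it is isotropic in $V$. Theorem~\ref{th3} then forbids orthomodularity. Note this direction does not use the hypothesis $q=p$.

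For (ii), the forward implication is (i); for the converse I would argue contrapositively. Assume $V$ contains an isotropic vector $(a,b)\in({\rm GF}(p))^2$. The case $p=2$ is immediate, because $(1,1)\in M$ and $1^2+1^2\equiv 0\pmod 2$ already violates the ``for all'' hypothesis. For $p$ odd, neither $a$ nor $b$ can be zero (else the other coordinate would be a square root of $0$, hence $0$, contradicting the vector being nonzero). I would then use the identity $c^2=(p-c)^2$ in ${\rm GF}(p)$ to replace each coordinate by its least positive representative in $\{1,\ldots,(p-1)/2\}$, and swap them if necessary so the smaller comes first. This produces $(x,y)\in M$ with $x^2+y^2\equiv a^2+b^2\equiv 0\pmod p$, contradicting the hypothesis. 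The restriction $q=p$ is genuinely needed here: in larger ${\rm GF}(q)$ one may have isotropic vectors with coordinates outside the prime subfield (for example $(1,x)$ in ${\rm GF}(9)$), which cannot be normalized into $M$.

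For (iii), suppose $\mathbf L(\mathbf V)$ is orthomodular. Proposition~\ref{prop1} forces $p>2$ (otherwise $m(q)=2$ and $m=2\geq m(q)$ would preclude orthomodularity), so $q$ is odd and $(q+1)/2$ is an integer. Theorem~\ref{th1}(vii) identifies the lattice reduct with $\mathbf M_{q+1}$, giving exactly $q+1$ atoms, all of dimension $1$. Because $m=2$, coatoms also have dimension $1$, so ${}^\perp$ restricts to an involution on the atom set; it has no fixed point, since $U=U^\perp$ would imply $U\cap U^\perp=U\neq\{\vec 0\}$, contradicting the complementation property (equivalent to orthomodularity in this setting, as noted in the introductory discussion). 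A fixed-point-free involution on $q+1$ atoms partitions them into $(q+1)/2$ orthogonal pairs, which is precisely the orthostructure of $\mathbf{MO}_{(q+1)/2}$, yielding the claimed isomorphism. The main obstacle I anticipate is the bookkeeping in the reduction step of (ii): the coordinates must be normalized simultaneously by sign (via $c^2=(p-c)^2$) and by order ($x\leq y$) to land inside $M$, while also verifying that no coordinate is zero. Parts (i) and (iii) then follow rather directly from the quoted theorems.
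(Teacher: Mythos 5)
Your proof is correct and follows essentially the same route as the paper's: parts (i) and (ii) via Theorem~\ref{th3} together with the observation that every nonzero square in $\mathbb Z_p$ is $d^2$ for some $d\in\{1,\ldots,(p-1)/2\}$, and part (iii) via Theorem~\ref{th1}(vii). Your treatment of (iii) is in fact more complete than the paper's one-line citation, since you also verify that $^\perp$ restricts to a fixed-point-free involution on the $q+1$ atoms, which is what actually pins down the orthostructure of $\mathbf{MO}_{(q+1)/2}$ rather than just the lattice reduct $\mathbf M_{q+1}$.
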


\begin{proof}
\
\begin{enumerate}[(i)]
\item If there exists some $(x,y)\in M$ with $p|(x^2+y^2)$ then $(x,y)$ is an isotropic vector of $V$ and hence $\mathbf L(\mathbf V)$ is not orthomodular according to Theorem~\ref{th3}.
\item Assume $q=p$. Then ${\rm GF}(q)\cong\mathbb Z_p$. According to Theorem~\ref{th3}, $\mathbf L(\mathbf V)$ is orthomodular if and only if $V$ does not contain an isotropic vector. Now $(a,b)\in\mathbb Z_p^2$ is isotropic if and only if $(a,b)\neq(0,0)$ and $a^2+b^2=0$ in $\mathbb Z_p$. If $(a,b)$ is isotropic then $a\neq0$ and $b\neq0$ in $\mathbb Z_p$. Since modulo $p$ all non-zero elements of $\mathbb Z_p$ are given by $1$ if $p=2$ and by $\pm1,\pm2,\pm3,\ldots,\pm(p-1)/2$ otherwise, all squares of non-zero elements are given modulo $p$ by $1$ if $p=2$ and by $1^2,2^2,3^2,\ldots,((p-1)/2)^2$ otherwise.
\item This follows from (vii) of Theorem~\ref{th1}.
\end{enumerate}
\end{proof}

For small $q$ we list all $2$-dimensional vector spaces $\mathbf V$ over ${\rm GF}(q)$ and indicate for which of them $\mathbf L(\mathbf V)$ is orthomodular.

\begin{example}
For $m=2$ we have
\[
\begin{array}{r|l|l|l}
 q & (L(\mathbf V),+,\cap) & \mathbf L(\mathbf V)    & \text{isotropic vector} \\
\hline
 2 & \cong\mathbf M_3      & \text{not orthomodular} & (1,1) \\
 3 & \cong\mathbf M_4      & \cong\mathbf{MO}_2      & \\
 4 & \cong\mathbf M_5      & \text{not orthomodular} & (1,1) \\
 5 & \cong\mathbf M_6      & \text{not orthomodular} & (1,2) \\
 7 & \cong\mathbf M_8      & \cong\mathbf{MO}_4      & \\
 8 & \cong\mathbf M_9      &\text{not orthomodular}  & (1,1) \\
 9 & \cong\mathbf M_{10}   &\text{not orthomodular}  & (1,x) \\
11 & \cong\mathbf M_{12}   & \cong\mathbf{MO}_6      & \\
13 & \cong\mathbf M_{14}   &\text{not orthomodular}  & (2,3) \\
16 & \cong\mathbf M_{17}   &\text{not orthomodular}  & (1,1) \\
17 & \cong\mathbf M_{18}   &\text{not orthomodular}  & (1,4)
\end{array}
\]
Here we used ${\rm GF}(9)\cong\mathbb Z_3/(x^2+1)$ and neither ${\rm GF}(9)\cong\mathbb Z_3/(x^2+x-1)$ nor ${\rm GF}(9)\cong\mathbb Z_3/(x^2-x-1)$.
\end{example}

Authors' addresses:

Ivan Chajda \\
Palack\'y University Olomouc \\
Faculty of Science \\
Department of Algebra and Geometry \\
17.\ listopadu 12 \\
771 46 Olomouc \\
Czech Republic \\
ivan.chajda@upol.cz

Helmut L\"anger \\
TU Wien \\
Faculty of Mathematics and Geoinformation \\
Institute of Discrete Mathematics and Geometry \\
Wiedner Hauptstra\ss e 8-10 \\
1040 Vienna \\
Austria, and \\
Palack\'y University Olomouc \\
Faculty of Science \\
Department of Algebra and Geometry \\
17.\ listopadu 12 \\
771 46 Olomouc \\
Czech Republic \\
helmut.laenger@tuwien.ac.at
\end{document}